\documentclass[10in]{amsart}

\setlength{\topmargin}{-0.5cm} \setlength{\textwidth}{15cm}
\setlength{\textheight}{22.6cm} \setlength{\topmargin}{-0.25cm}
\setlength{\headheight}{1em} \setlength{\headsep}{0.5cm}
\setlength{\oddsidemargin}{0.40cm}
\setlength{\evensidemargin}{0.40cm}

\usepackage{enumerate,verbatim}
\usepackage[all,2cell,ps]{xy}
\usepackage{mathptmx}
\usepackage[notcite, notref]{}
\usepackage[pagebackref]{hyperref}
\usepackage{nicefrac}
\usepackage{array}
\usepackage{xcolor}
\usepackage{amsthm}
\usepackage[leqno]{amsmath}
\usepackage[pagebackref]{hyperref}
\usepackage{mathptmx}

\usepackage{amsfonts,amsmath,amssymb,amsthm,amscd,amsxtra}
\usepackage{enumerate,verbatim}

\usepackage[all,2cell,ps]{xy}
\usepackage[pagebackref]{hyperref}
\usepackage{todonotes}
\theoremstyle{plain} 

\newtheorem{thm}{Theorem}[section]

\theoremstyle{definition}

\newtheorem{chunk}[thm]{}

\newtheorem{rmk}[thm]{Remark}

\numberwithin{equation}{section}

\newcommand{\fm}{\mathfrak{m}}

\newcommand{\ZZ}{\mathbb{Z}}
\newcommand{\NN}{\mathbb{N}}

\def\CI{\operatorname{\mathsf{CI-dim}}}
\def\G-dim{\operatorname{\mathsf{G-dim}}}

\def\pd{\operatorname{\mathsf{pd}}}

 \DeclareMathOperator{\cx}{cx}
\DeclareMathOperator{\Tr}{Tr}

\def\Ext{\operatorname{Ext}}
\def\Tor{\operatorname{Tor}}
\def\depth{\operatorname{depth}}
\def\pd{\operatorname{pd}}

\def\Hom{\operatorname{Hom}}

\DeclareMathOperator{\md}{\operatorname{\mathsf{mod}}}




\DeclareMathOperator{\edim}{embdim}
\def\pd{\operatorname{\mathsf{pd}}}

\DeclareMathOperator{\rank}{rank}

\def\pd{\operatorname{\mathsf{pd}}}

\def\Tr{\mathsf{Tr}}

\def\urltilda{\kern -.15em\lower .7ex\hbox{\~{}}\kern .04em}
\def\urldot{\kern -.10em.\kern -.10em}\def\urlhttp{http\kern -.10em\lower -.1ex
\hbox{:}\kern -.12em\lower 0ex\hbox{/}\kern -.18em\lower 0ex\hbox{/}}

\newcommand{\numberseries}{\bfseries}   

\newlength{\thmtopspace}                
\newlength{\thmbotspace}                
\newlength{\thmheadspace}               
\newlength{\thmindent}                  

\setlength{\thmtopspace}%
{0.7\baselineskip plus 0.35\baselineskip minus 0.2\baselineskip}
\setlength{\thmbotspace}%
{0.45\baselineskip plus 0.15\baselineskip minus 0.1\baselineskip} 
\setlength{\thmheadspace}{0.5em}
\setlength{\thmindent}{0pt}     

\newtheoremstyle{fixed bf head,slanted body}
                {\thmtopspace}{\thmbotspace}{\slshape}
                {\thmindent}{\bfseries}{.}{\thmheadspace}
                {{\numberseries \thmnumber{#2\;}}\thmname{#1}\thmnote{ (#3)}}

\newtheoremstyle{variable bf head,slanted body}
                {\thmtopspace}{\thmbotspace}{\slshape}
                {\thmindent}{\bfseries}{.}{\thmheadspace}
                {{\numberseries \thmnumber{#2\;}}\thmname{#1}\thmnote{ #3}}

\newtheoremstyle{fixed bf head,upright body}
                {\thmtopspace}{\thmbotspace}{\upshape}
                {\thmindent}{\bfseries}{.}{\thmheadspace}
                {{\numberseries \thmnumber{#2\;}}\thmname{#1}\thmnote{ (#3)}}

\newtheoremstyle{numbered paragraph}
                {\thmtopspace}{\thmbotspace}{\upshape}
                {\thmindent}{\upshape}{}{\thmheadspace}
                {{\numberseries \thmnumber{#2.}}}


\setlength{\thmtopspace}%
{0.7\baselineskip plus 0.35\baselineskip minus 0.2\baselineskip}
\setlength{\thmbotspace}%
{0.45\baselineskip plus 0.15\baselineskip minus 0.1\baselineskip} 
\setlength{\thmheadspace}{0.5em}
\setlength{\thmindent}{0pt}

\begin{document}

\title{On modules with reducible complexity}

\author[Celikbas, Sadeghi, Taniguchi]
{Olgur Celikbas, Arash Sadeghi, and Naoki Taniguchi}

\address{Olgur Celikbas \\
Department of Mathematics \\
West Virginia University\\
Morgantown, WV 26506-6310 USA}
\email{olgur.celikbas@math.wvu.edu}


\address{Arash Sadeghi\\
School of Mathematics\\
Institute for Research in Fundamental Sciences, (IPM) \\
P.O. Box: 19395-5746, Tehran, Iran}
\email{sadeghiarash61@gmail.com}

\address{Naoki Taniguchi\\
Global Education Center\\
Waseda University\\
1-6-1 Nishi-Waseda, Shinjuku-ku, Tokyo 169-8050, Japan}
\email{naoki.taniguchi@aoni.waseda.jp}
\urladdr{http://www.aoni.waseda.jp/naoki.taniguchi/}

\thanks{Sadeghi's research was supported by a grant from IPM. Taniguchi's research was supported by JSPS Grant-in-Aid for Young Scientists (B) 17K14176 and Waseda University Grant for Special Research Projects 2018K-444, 2018S-202.}


\keywords{Auslander transpose, complexity, depth formula, vanishing of Ext and Tor, reducible complexity}

\subjclass[2010]{}

\begin{abstract} In this paper we generalize a result, concerning a depth equality over local rings, proved independently by Araya and Yoshino, and Iyengar. Our result exploits complexity, a concept which was initially defined by Alperin for finitely generated modules over group algebras, introduced and studied in local algebra by Avramov, and subsequently further developed by Bergh.

\end{abstract}

\maketitle

\thispagestyle{empty}

\section{Introduction}

Throughout $R$ denotes a commutative Noetherian local ring with unique maximal ideal $\fm$ and residue field $k$, and $\md R$ denotes the category of all finitely generated $R$-modules.

In this paper we are mainly concerned with the following theorem of Auslander \cite{Au}:

\begin{thm}\label{thmold} (\cite[3.1]{Au}) Let $M, N\in \md R$ be modules, either of which has finite projective dimension. If $\Tor_{i}^R(M,N)=0$ for all $i\geq 1$, then it follows  that $\depth(M)+\depth(N)=\depth(R)+\depth(M\otimes_R N)$.
\end{thm}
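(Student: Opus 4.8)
The plan is to reduce to the case $\pd_R M<\infty$ and then to induct on $p:=\pd_R M$. Since $\Tor_i^R(M,N)$ is symmetric in $M$ and $N$ and the asserted equality is symmetric as well, we may assume $\pd_R M<\infty$; we may also assume $M\neq 0\neq N$, the remaining case being trivial. Throughout I would use the Auslander--Buchsbaum equality $\depth X=\depth R-\pd_R X$ for modules of finite projective dimension, together with the elementary depth inequalities for a short exact sequence $0\to A\to B\to C\to 0$ of nonzero finitely generated modules; the one that matters here is
\[
\depth C\ \ge\ \min\{\depth A-1,\ \depth B\},\qquad\text{with equality whenever }\depth A\neq\depth B .
\]
(Every tensor product appearing below is nonzero; this is routine from $\Tor_1^R(M,N)=0$ and Nakayama.)

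\emph{The inductive step.} Suppose $p\ge 2$ and choose a short exact sequence $0\to\Omega M\to F\to M\to 0$ with $F$ free, so $\pd_R\Omega M=p-1$. Because $\Tor_i^R(M,N)=0$ for all $i\ge 1$, this gives $\Tor_i^R(\Omega M,N)\cong\Tor_{i+1}^R(M,N)=0$ for all $i\ge 1$ and a short exact sequence
\[
0\longrightarrow\Omega M\otimes_RN\longrightarrow F\otimes_RN\longrightarrow M\otimes_RN\longrightarrow 0 .
\]
Applying the induction hypothesis to $\Omega M$ and then Auslander--Buchsbaum gives $\depth(\Omega M\otimes_RN)=\depth\Omega M+\depth N-\depth R=\depth N-(p-1)$, which is \emph{strictly} less than $\depth N=\depth(F\otimes_RN)$ because $p\ge 2$. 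Hence the depth inequalities force $\depth(M\otimes_RN)=\depth(\Omega M\otimes_RN)-1=\depth N-p=\depth M+\depth N-\depth R$, which is the desired formula for $M$.

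\emph{The base cases $p\le 1$.} If $p=0$ then $M$ is free and the formula is immediate. The case $p=1$ is the crux: here $\Omega M$ is free, so in the short exact sequence above $\depth(\Omega M\otimes_RN)=\depth N=\depth(F\otimes_RN)$, the depth inequalities only yield $\depth(M\otimes_RN)\ge\depth N-1$, and the reverse inequality must be obtained by hand. I would take a \emph{minimal} free presentation $0\to R^{s}\xrightarrow{\varphi}R^{r}\to M\to 0$, so that all entries of $\varphi$ lie in $\fm$ and $r,s\ge 1$; tensoring with $N$ and using $\Tor_1^R(M,N)=0$ produces
\[
0\longrightarrow N^{s}\xrightarrow{\varphi\otimes N}N^{r}\longrightarrow M\otimes_RN\longrightarrow 0 .
\]
First one notes $\depth N\ge 1$: otherwise some nonzero $y\in N$ with $\fm y=0$ would make $(y,0,\dots,0)$ a nonzero element of $\ker(\varphi\otimes N)=\Tor_1^R(M,N)=0$. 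Setting $t:=\depth N$, the long exact sequence in local cohomology together with $\LC^{t-1}_{\fm}(N^{r})=0$ gives an isomorphism
\[
\LC^{t-1}_{\fm}(M\otimes_RN)\ \cong\ \ker\!\bigl(\LC^{t}_{\fm}(N)^{s}\xrightarrow{\varphi}\LC^{t}_{\fm}(N)^{r}\bigr),
\]
the map on the right being multiplication by the matrix $\varphi$. Since $\LC^{t}_{\fm}(N)$ is a nonzero Artinian module it has a nonzero socle element $\xi$, and $(\xi,0,\dots,0)$ lies in that kernel because the entries of $\varphi$ annihilate $\xi$. Thus $\LC^{t-1}_{\fm}(M\otimes_RN)\neq 0$, so $\depth(M\otimes_RN)\le t-1$, and combined with the reverse inequality we get $\depth(M\otimes_RN)=\depth N-1=\depth M+\depth N-\depth R$ by Auslander--Buchsbaum.

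The main obstacle is exactly this $p=1$ case: when $\Omega M$ is free the depth inequalities are inconclusive for the short exact sequence in play, and one must instead compute directly with local cohomology, exploiting both the minimality of the presentation of $M$ (so $\varphi$ has all entries in $\fm$) and, crucially, the vanishing $\Tor_1^R(M,N)=0$, which is precisely what excludes the degenerate possibility $\depth N=0$. Every $p\ge 2$ then follows by the purely formal syzygy induction above.
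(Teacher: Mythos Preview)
Your argument is correct. Note, however, that the paper does not supply its own proof of this statement: Theorem~\ref{thmold} is quoted from Auslander \cite[3.1]{Au} as background and is used as a black box in the proof of Theorem~\ref{prp:depth-reducible} (in the base case $\cx_R(M)=0$). So there is no in-paper proof to compare against.

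That said, your proof is essentially the classical one. The induction on $p=\pd_R M$ via the syzygy sequence and the depth lemma is standard, and your handling of the delicate case $p=1$ is clean: using a \emph{minimal} presentation so that the matrix $\varphi$ has entries in $\fm$, deducing $\depth_R N\ge 1$ from $\Tor_1^R(M,N)=0$ by producing a socle element in $\ker(\varphi\otimes N)$, and then the local cohomology computation showing $\LC^{t-1}_\fm(M\otimes_R N)\neq 0$ via a socle element of the Artinian module $\LC^t_\fm(N)$. Each step checks out; in particular, the identification $\ker(\varphi\otimes N)=\Tor_1^R(M,N)$ is valid because $R^s$ and $R^r$ are free, and the map induced on $\LC^t_\fm$ is indeed matrix multiplication by $\varphi$ since local cohomology is $R$-linear and additive. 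The depth-lemma deduction in the inductive step (that $\depth A<\depth B$ forces $\depth C=\depth A-1$) is also correct.
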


Huneke and Wiegand extended Auslander's result, and proved in \cite{HW1} that Tor-independent modules (not necessarily of finite projective dimension) over complete intersection rings also satisfy the depth equality of Theorem \ref{thmold}; such depth equality was dubbed ``the depth formula'' by Huneke and Wiegand in \cite {HW1}. 

The aforementioned result of Huneke and Wiegand was extended -- independently by Araya and Yoshino  \cite{ArY}, and Iyengar \cite{I} -- to the case where the ring in question is local and either of the modules considered has finite complete intersection dimension; see also Christensen and Jorgensen \cite{CJ}, Foxby \cite{Foxby} and Iyengar \cite{I} for extensions of the depth formula to certain complexes of modules. 

The main purpose of this article is to prove an extension of Theorem \ref{thmold}. Our main result is:

\begin{thm}\label{thmintro} Let $M, N\in \md R$ be modules. Assume $\Ext^i_R(M,R)=0$ for $i\gg0$ and $M$ has reducible complexity.  If $\Tor_{i}^R(M,N)=0$ for all $i\geq 1$, then $\depth(M)+\depth(N)=\depth(R)+\depth(M\otimes_R N)$, i.e., the depth formula for $M$ and $N$ holds.
\end{thm}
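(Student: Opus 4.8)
The plan is to prove Theorem~\ref{thmintro} by induction on the complexity-reducing data attached to $M$, thereby reducing the general case to the situation already handled by Auslander (Theorem~\ref{thmold}) and by Araya--Yoshino/Iyengar. Recall that $M$ having reducible complexity means either $\cx M = 0$ (equivalently $\pd M < \infty$) or there is a short exact sequence $0 \to M \to K \to \syz^{-t} M \to 0$ (for some $t \geq 1$) in which $K$ has reducible complexity, $\cx K < \cx M$, and $\depth K = \depth M$. Since also $\Ext^i_R(M,R) = 0$ for $i \gg 0$ by hypothesis, I first want to observe that this Ext-vanishing is inherited along the defining sequence: the module $\syz^{-t}M$ is built from $M$ by cosyzygies, so its stable Ext into $R$ agrees with that of $M$ up to a shift, and then from the long exact sequence in $\Ext_R(-,R)$ applied to $0 \to M \to K \to \syz^{-t}M \to 0$ one deduces $\Ext^i_R(K,R) = 0$ for $i \gg 0$ as well. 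This keeps the inductive hypothesis available for $K$.

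The base case $\cx M = 0$ is exactly Theorem~\ref{thmold}: $\pd M < \infty$, $\Tor_i^R(M,N) = 0$ for $i \geq 1$, so the depth formula holds. For the inductive step, suppose $\cx M \geq 1$ and fix the sequence $0 \to M \to K \to \syz^{-t}M \to 0$ as above. The strategy is to transfer the Tor-independence of $(M,N)$ to Tor-independence of $(K,N)$ and of $(\syz^{-t}M, N)$, apply the induction hypothesis to the pair $(K,N)$ to get the depth formula for $K$ and $N$, and then recover the depth formula for $M$ and $N$ by comparing depths through the exact sequences obtained after tensoring with $N$. Concretely: tensoring $0 \to M \to K \to \syz^{-t}M \to 0$ with $N$ and using $\Tor_{\geq 1}^R(M,N) = 0$ gives both $\Tor_i^R(\syz^{-t}M, N) \cong \Tor_{i-1}^R(M,N) = 0$ for $i \geq 2$ (and one checks $\Tor_1$ as well, since $\Tor_1^R(\syz^{-t}M,N)$ injects into $M \otimes_R N$), hence $\Tor_{\geq 1}^R(\syz^{-t}M,N) = 0$, and then from the long exact Tor sequence $\Tor_{\geq 1}^R(K,N) = 0$ too. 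Since $\syz^{-t}M$ is a cosyzygy of $M$, Tor-independence of $(M,N)$ also forces the relevant intermediate cokernels to be Tor-independent of $N$, and $\depth(\syz^{-t}M \otimes_R N)$ relates to $\depth(M\otimes_R N)$ through dimension shifting; the Ext-vanishing hypothesis on $M$ is what guarantees these cosyzygy modules and the map $M \to K$ behave well (e.g.\ that $M \to K$ stays injective after $-\otimes_R N$, using that $\syz^{-t}M$ is a high cosyzygy).

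With the depth formula in hand for $(K,N)$, namely $\depth K + \depth N = \depth R + \depth(K\otimes_R N)$, and knowing $\depth K = \depth M$, it remains to show $\depth(K \otimes_R N) = \depth(M \otimes_R N)$. This I would extract from the exact sequence $0 \to M \otimes_R N \to K \otimes_R N \to \syz^{-t}M \otimes_R N \to 0$ (exact on the left because $\Tor_1^R(\syz^{-t}M, N) = 0$) together with the depth inequalities of the depth lemma, comparing against $\depth(\syz^{-t}M \otimes_R N)$, which in turn is controlled by iterating the argument or by the companion short exact sequences defining the cosyzygies. Combining the three equalities $\depth K = \depth M$, $\depth(K\otimes_R N) = \depth(M \otimes_R N)$, and the depth formula for $(K,N)$ yields the depth formula for $(M,N)$, completing the induction.

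The main obstacle I anticipate is the bookkeeping around the cosyzygy module $\syz^{-t}M$: one must verify carefully that Tor-independence of $(M,N)$ propagates to $(\syz^{-t}M, N)$ \emph{including degree one}, and that the tensored sequence $0 \to M\otimes_R N \to K \otimes_R N \to \syz^{-t}M \otimes_R N \to 0$ is short exact and lets the depth of $M\otimes_R N$ and $K\otimes_R N$ be matched. This is precisely where the hypothesis $\Ext^i_R(M,R) = 0$ for $i \gg 0$ does real work --- it ensures that the high cosyzygies of $M$ are totally reflexive enough (or at least that the relevant $\Tor$ and $\Hom$ computations against $R$ vanish) so that no spurious homology obstructs the left-exactness and the depth comparison. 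A cleaner route, which I would pursue in parallel, is to phrase the whole argument in terms of $\syz$-independence / the total acyclicity of the relevant complexes and invoke a depth-lemma induction directly on the pair built from the defining sequence, but the inductive skeleton above is the backbone in any case.
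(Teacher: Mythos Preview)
Your proposal rests on a definitional misunderstanding that derails the argument. In the short exact sequence coming from reducible complexity, the third term is the \emph{syzygy} $\Omega^{|\eta|-1}(M)$ (with $|\eta|\geq1$), not a cosyzygy $\Omega^{-t}(M)$. This changes the landscape entirely: Tor-independence of $(M,N)$ passes to $(\Omega^n(M),N)$ and then to $(K,N)$ automatically, with no appeal to the Ext-vanishing hypothesis, and the tensored sequence $0\to M\otimes_RN\to K\otimes_RN\to\Omega^n(M)\otimes_RN\to0$ is exact for free. So the obstacle you anticipate---propagating Tor-vanishing to cosyzygies and securing left-exactness---is not present, but a different and deeper obstacle takes its place.

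The real gap is in the step where you claim $\depth_R(K\otimes_RN)=\depth_R(M\otimes_RN)$ via the depth lemma. For that you must control $\depth_R(\Omega^n(M)\otimes_RN)$, but $\Omega^n(M)$ has the \emph{same} complexity as $M$, so your induction on complexity does not apply to it, and ``iterating the argument'' has nothing to terminate on. The paper closes this gap with two ingredients missing from your sketch: (a) an outer induction on $t=\depth R-\depth_R(M)$, which handles $\Omega^n(M)$ when $1\leq n\leq t-1$ because then $\Omega^n(M)$ has strictly smaller $t$-value and (by Bergh) still has reducible complexity; and (b) a substantial technical lemma (the paper's \ref{p3}) asserting that if $\Ext^i_R(M,R)=0$ for \emph{all} $i\geq1$ and $M$ has weak-reducible complexity, then $\depth_R(M\otimes_RN)=\depth_R(N)$. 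This lemma covers both the base case $t=0$ and the case $n\geq t$, and its proof---which is the bulk of the paper---goes through an intricate analysis of $\Ext^i_R(\Tr M,N)$ via the Auslander transpose, exploiting the reducing sequences attached to both $\eta$ and $\eta^2$. That lemma is where the hypothesis $\Ext^i_R(M,R)=0$ for $i\gg0$ does its real work; without an analogue of it your induction does not close.
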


In the next section, we recall the definition of complexity and that of reducible complexity, and prove Theorem \ref{thmintro} in section 3. Here let us note that the extension of Theorem \ref{thmold} we establish in Theorem \ref{thmintro} seems to be quite different in nature than those exist in the literature: all of the improvements of Theorem \ref{thmold}, which we are aware of, assume the finiteness of a version of a homological dimension of the module in question. On the contrary, in Theorem \ref{thmintro}, what we assume for the module $M$ is not a homological dimension. Moreover, our hypothesis on $M$ is weaker than the condition ``$M$ has finite complete intersection dimension ". In general, if $M$ has finite complete intersection dimension (e.g., $R$ is a complete intersection), then $\Ext^i_R(M,R)=0$ for $i\gg0$ and $M$ has reducible complexity, but not vice versa: there do exist examples of modules $M$ over Gorenstein rings (so that $\Ext^i_R(M,R)=0$ for $i\gg0$) such that $M$ has reducible complexity, but $M$ does not have finite complete intersection dimension; see, for example,  \cite[Example on page 136]{Be}.

\section{Preliminaries}

We refer the reader to \cite{AuBr, AGP} for the definitions of standard homological dimensions, such as the complete intersection dimension, and proceed by recalling the definitions of Auslander transpose and complexity.

\begin{chunk} \label{a1} \textbf{Auslander Transpose.} (\cite[2.8]{AuBr})
Let $M$ be an $R$-module. Then the \emph{transpose} $\Tr M$ of $M$
is given by the exact sequence $0\rightarrow M^*\rightarrow P_0^*\xrightarrow{f^{\ast}} P_1^*\rightarrow \Tr M\rightarrow 0$, where  $(-)^{\ast}=\Hom_R(-,R)$ and $P_1 \xrightarrow {f}  P_0\rightarrow M\rightarrow 0$ is a projective presentation of $M$. Notice, $\Tr M$ is unique, up to projectives. Moreover, there is an exact sequence of functors of the form:
\vspace{-0.1in}
\begin{align} \tag{\ref{a1}.1}
0\rightarrow\Ext^1_R(\Tr\Omega^nM,-)\rightarrow\Tor_n^R(M,-)\rightarrow\Hom_R(\Ext^n_R(M,R),-)
\rightarrow\Ext^2_R(\Tr\Omega^nM,-).   \;\;\;\;\;\; \;\;\;\;\; \;\;\; \qed 
\end{align} 



\end{chunk}





\begin{chunk} \textbf{Complexity.} \label{cx} (\cite{Alp, AE, Av1, AvBu}) If $B =\{b_i\}_{i\geq 0}$ is a sequence of nonnegative integers, then the \textit{complexity} of the sequence $B$ is  $\cx(B) = \inf\{r\in \NN \cup \{0\} \mid
b_n\leq A \cdot n^{r-1} \ \text {for some real number} \  A  \ \text
{and for all} \ n\gg 0 \}$. 

The complexity $\cx(M,N)$ of a pair of modules $M,N \in \md R$ is $\cx\left( \{ \rank_{k}(\Ext^{i}_{R}(M,N)\otimes_{R}k) \} \right)$. Then the complexity $\cx(M)$ of $M$ equals $\cx(M,k)$ so that it is a measure on a polynomial scale of the growth of the ranks of the free modules in its minimal free resolution; see \cite{AvBu}. If $M\in \md R$ has finite complete intersection dimension (e.g., $R$ is a complete intersection), then $\cx_R(M) \leq \edim(R)-\depth(R)$. 
\pushQED{\qed} 
\qedhere
\popQED	
\end{chunk}

\begin{chunk} \textbf{Weak Reducible Complexity.} \label{rcx} (\cite{Be}) Let $M, N \in \md R$. Consider a homogeneous element $\eta$ in the graded module
$\Ext^*_R(M,N)=\bigoplus^{\infty}_{i=0}\Ext^i_R(M,N)$. Then choose a map $f_{\eta}:\Omega^{|\eta|}(M)\rightarrow N$ representing $\eta$, where $\Omega(M)$ denotes the syzygy of $M$ and $|\eta|$ denotes the degree of $\eta$ in $\Ext^*_R(M,N)$. This yields a commutative diagram with exact rows:
\begin{center}
$\begin{CD}
&&&&&&&&\\
  \ \ &&&&  0@>>>\Omega^{|\eta|}(M) @>>> F_{|\eta|-1}@>>>\Omega^{|\eta|-1}(M) @>>>0&  \\
                                &&&&&& @VV{f_{\eta}}V @VV V @VV{\parallel} V\\
  \ \  &&&& 0@>>> N @>>> K_{\eta} @>>>\Omega^{|\eta|-1}(M)@>>>0.&\\
\end{CD}$
\end{center}
\vspace{0.05in}

Here $K_{\eta}$ is the pushout of $f_{\eta}$ and the inclusion $\Omega^{|\eta|}(M)\hookrightarrow F_{|\eta|-1}$. Note the module $K_{\eta}$ is independent, up to isomorphism, of the map $f_{\eta}$ chosen to represent ${\eta}$.

The full subcategory of $\md R$ consisting of modules having
\emph{weak-reducible complexity} is defined inductively as follows:
\begin{enumerate}[\rm(i)]
\item Each module in $\md R$ of finite projective dimension has weak-reducible complexity.
\item If $X\in \md R$ is a module with $0<\cx_R(X)<\infty$, then $X$ has weak-reducible complexity provided that
there exists a homogeneous element $\eta\in\Ext^{*}_R(X,X)$, of positive degree, such that $\cx_R(K_{\eta}) < \cx_R(X)$, and $K_{\eta}$ has weak-reducible complexity.\pushQED{\qed} 
\qedhere
\popQED	
\end{enumerate}
\end{chunk}

\begin{chunk} \textbf{Reducible Complexity.} \label{rcx2} (\cite{Be}) A module $X\in \md R$ has \emph{reducible complexity} if it has weak-reducible complexity and $\depth_R(M)=\depth_R(K_{\eta})$, 
where $K_{\eta}$ is the module discussed in \ref{rcx}. Therefore, over Cohen-Macaulay local rings, the class of modules having weak reducible complexity coincide with the class of modules with reducible complexity. \pushQED{\qed} 
\qedhere
\popQED	
\end{chunk}

\begin{chunk} \textbf{Complete Intersection Dimension Versus Reducible Complexity.} \label{rcx3} If $M\in \md R$ has finite complete intersection dimension, then it has reducible complexity; see \cite[2.2(i)]{Be}. On the other hand, there are modules $M\in \md R$ having reducible complexity with infinite complete intersection dimension satisfying $\Ext^i_R(M,R)=0$ for all $i\gg 0$; see for example \cite[Example on page 136]{Be} and \cite[3.1]{GAV}. \pushQED{\qed} 
\qedhere
\popQED	
\end{chunk}

\section{Main Result}

Bergh \cite[2.2(ii)]{Be} showed that, if $R$ is a Cohen-Macaulay local ring and $M\in \md R$ has reducible complexity, then so does $\Omega^i(M)$ for each $i\geq 0$; his argument in fact implies that the Cohen-Macaulay assumption can be removed for certain values of $i$. More precisely, Bergh's result implies:

\begin{chunk} (\cite[2.2(ii)]{Be}) \label{lem:reduceible} Let $M\in \md R$ be a module that has weak-reducible complexity. 
\begin{enumerate}[\rm(i)]
\item Then $\Omega^{i}(M)$ has weak-reducible complexity for each integer $i\geq0$. 
\item If $t=\depth(R)-\depth_R(M)\geq 2$, then $\Omega^{i}(M)$ has reducible complexity for each $i=1, \ldots, t-1$. \pushQED{\qed} 
\qedhere
\popQED	
\end{enumerate}
\end{chunk}
We will also need another result of Bergh:

\begin{chunk} (\cite[3.1]{Be}) \label{lem:reduceible2} Let $M\in \md R$ be a module that has reducible complexity. If $\Ext^i_R(M,R)=0$ for all $i\gg0$, then it follows that $\depth(R)-\depth_R(M)=\sup\{i\in \ZZ \mid\Ext^i_R(M,R)\neq0\}$.  
\pushQED{\qed} 
\qedhere
\popQED	
\end{chunk}

The proof of Theorem \ref{prp:depth-reducible} relies on the following technical result whose proof is quite involved, and hence deferred to the end of this section.

\begin{chunk} \label{p3} Let $M, N\in \md R$ be nonzero modules. Assume $M$ has weak-reducible complexity. Assume further $\Tor_i^R(M,N)=0=\Ext^i_R(M,R)$ for all $i\geq 1$. Then it follows $\depth_R(M\otimes_RN)=\depth_R(N)$ and that $\Ext^i_R(\Tr M,N)=0$ for all $i\geq 1$.
\pushQED{\qed} 
\qedhere
\popQED	
\end{chunk}

Next is our main result, which is a generalization of Theorem \ref{thmold} advertised in the introduction. Recall that, if $R$ is a local ring and $M\in \md R$ is a module with $\CI_R(M)<\infty$, then $M$ has reducible complexity and $\Ext_R^i(M,R)=0$ for all $i\gg0$, but not vice versa, in general.

\begin{thm}\label{prp:depth-reducible}
Let $M, N \in \md R$ be modules. Assume $\Ext^j_R(M,R)=0$ for all $j\gg0$. Assume further $M$ has reducible complexity. If $\Tor_{i}^R(M,N)=0$ for all $i\geq1$, then the depth formula for $M$ and $N$ holds, i.e., 
  \begin{equation}\notag{}
    \depth_R(M)+\depth_R(N)=\depth(R)+\depth_R(M\otimes_R N).
  \end{equation}
\end{thm}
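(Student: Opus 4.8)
The plan is to prove the depth formula by induction on $t = \depth(R) - \depth_R(M) \geq 0$, reducing at each step to a syzygy of $M$ and invoking the technical result \ref{p3} together with Bergh's results \ref{lem:reduceible} and \ref{lem:reduceible2}. First I would dispose of the base case $t \leq 1$. When $t = 0$, i.e.\ $\depth_R(M) = \depth(R)$, we have by \ref{lem:reduceible2} that $\Ext^i_R(M,R) = 0$ for all $i \geq 1$ (since $\sup\{i : \Ext^i_R(M,R) \neq 0\} = 0$, and $\Ext^0_R(M,R) = M^* \neq 0$), so \ref{p3} applies directly and gives $\depth_R(M \otimes_R N) = \depth_R(N)$; combined with $\depth_R(M) = \depth(R)$ this is exactly the depth formula. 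The case $t = 1$ needs a small separate argument: here $\Ext^i_R(M,R) = 0$ for $i \geq 2$ but possibly $\Ext^1_R(M,R) \neq 0$; I would handle it either by the same method after passing to $\Omega^1(M)$ (which has reducible complexity by \ref{lem:reduceible}(i) and the Cohen-Macaulay-free part of Bergh's argument — though one must be careful, since $\Omega^1(M)$ need only have \emph{weak}-reducible complexity in general, which is all \ref{p3} requires) or by a direct depth-count using the change-of-rings/long exact sequence.

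For the inductive step, assume $t \geq 2$ and that the formula holds for all pairs where the first module has the analogous invariant strictly smaller. Set $M' = \Omega^1(M)$. By \ref{lem:reduceible}(ii), since $t \geq 2$, the module $M'$ has reducible complexity, and moreover $\depth_R(M') = \depth(R) - (t-1)$ because syzygies raise depth toward $\depth(R)$ in this range; thus $M'$ has invariant $t - 1$. From the short exact sequence $0 \to M' \to F \to M \to 0$ with $F$ free one gets $\Tor_i^R(M', N) \cong \Tor_{i+1}^R(M,N) = 0$ for all $i \geq 1$, and $\Ext^j_R(M', R) \cong \Ext^{j+1}_R(M,R) = 0$ for $j \gg 0$. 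So the inductive hypothesis yields $\depth_R(M') + \depth_R(N) = \depth(R) + \depth_R(M' \otimes_R N)$. The task is then to transfer this to $M$: tensoring the sequence with $N$ and using $\Tor_1^R(M,N) = 0$ gives a short exact sequence $0 \to M' \otimes_R N \to F \otimes_R N \to M \otimes_R N \to 0$, i.e.\ $0 \to M' \otimes_R N \to N^{\oplus r} \to M \otimes_R N \to 0$. Applying the depth lemma to this sequence, together with the known values $\depth_R(M') = \depth(R) - (t-1)$, $\depth_R(M) = \depth(R) - t$, and the inductively computed $\depth_R(M' \otimes_R N)$, should pin down $\depth_R(M \otimes_R N)$ and close the induction.

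The main obstacle I anticipate is precisely this last depth-lemma bookkeeping: from $0 \to M' \otimes N \to N^{\oplus r} \to M \otimes N \to 0$ the depth lemma only gives inequalities in general, so one needs to know which of the three depths is strictly smallest (or argue about equality in the borderline case). The clean way around this is to observe that $M' \otimes_R N = \Omega^1_{\text{over } N\text{'s resolution...}}$—more precisely, since $\Tor_i^R(M,N) = 0$ for $i \geq 1$, tensoring a free resolution of $M$ with $N$ stays exact, so $M' \otimes_R N$ is a (first) syzygy of $M \otimes_R N$ in an exact complex built from copies of $N$; hence $\depth_R(M' \otimes_R N) \geq \min\{\depth_R(N), \depth_R(M \otimes_R N) + 1\}$ with the comparison controlled. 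One then checks that the value $\depth_R(M' \otimes_R N)$ forced by the inductive formula is consistent only with $\depth_R(M \otimes_R N) = \depth_R(M' \otimes_R N) - 1 = \depth(R) - t - 1 + \depth_R(N) - \depth(R) + \dots$; carefully arranging these identities gives $\depth_R(M) + \depth_R(N) = \depth(R) + \depth_R(M \otimes_R N)$. An alternative, and perhaps cleaner, route that sidesteps the induction altogether: use \ref{lem:reduceible}(ii) to pass directly to $M'' = \Omega^{t-1}(M)$, which has reducible complexity with $\depth_R(M'') = \depth(R) - 1$, so $\Ext^{\geq 2}_R(M'', R) = 0$ by \ref{lem:reduceible2}; handle the resulting "$t=1$" situation once via \ref{p3} applied to $\Omega^1(M'') = \Omega^t(M)$ (depth equal to $\depth(R)$, all positive Ext into $R$ vanishing), obtaining $\depth_R(\Omega^t(M) \otimes_R N) = \depth_R(N)$, and then descend $t$ steps down the exact complex $0 \to \Omega^t(M) \otimes N \to N^{\oplus} \to \cdots \to N^{\oplus} \to M \otimes N \to 0$ via repeated application of the depth lemma, at each stage using $\depth_R(\Omega^i(M)) = \depth(R) - i$ for $0 \le i \le t$ to force the depth to drop by exactly one. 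Either way the crux is controlling the depth lemma along this exact complex, and the depth values of the syzygies of $M$ (a consequence of the Auslander–Buchsbaum-type behavior encoded in \ref{lem:reduceible2}) are exactly what makes the drops exact rather than merely inequalities.
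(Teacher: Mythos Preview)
Your inductive step for $t \geq 2$ is correct and clean: once the inductive hypothesis gives $\depth_R(\Omega^1(M)\otimes_R N)=\depth_R(N)-(t-1)<\depth_R(N)$, the depth lemma applied to $0\to \Omega^1(M)\otimes_R N\to N^{\oplus r}\to M\otimes_R N\to 0$ forces both the upper and lower bound and yields $\depth_R(M\otimes_R N)=\depth_R(N)-t$. The gap is the base case $t=1$ (equivalently, the first drop in your alternative descent from $\Omega^t(M)$). There, \ref{p3} applied to $\Omega^1(M)$ gives $\depth_R(\Omega^1(M)\otimes_R N)=\depth_R(N)$, so in $0\to \Omega^1(M)\otimes_R N\to N^{\oplus r}\to M\otimes_R N\to 0$ the left and middle terms have \emph{equal} depth. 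In that configuration the depth lemma only yields $\depth_R(M\otimes_R N)\geq \depth_R(N)-1$; there is no upper bound, and nothing you have written rules out $\depth_R(M\otimes_R N)\geq \depth_R(N)$. Your suggestion that the inductive value is ``consistent only with'' a drop by one is simply false in this borderline case, and the vague ``direct depth-count using the change-of-rings/long exact sequence'' does not name an actual argument.

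The paper's proof avoids this obstruction by a different induction, on $\cx_R(M)$, and by using directly the defining short exact sequence of reducible complexity for $M$ itself --- something your argument never invokes. One takes $0\to M\to K\to \Omega^n(M)\to 0$ with $\depth_R(K)=\depth_R(M)$ and $\cx_R(K)<\cx_R(M)$; tensoring with $N$ places $M\otimes_R N$ on the \emph{left}, giving $0\to M\otimes_R N\to K\otimes_R N\to \Omega^n(M)\otimes_R N\to 0$. The inductive hypothesis on complexity furnishes the depth formula for $K$, so $\depth_R(K\otimes_R N)=\depth_R(N)-t$, and a case analysis on $n$ (using \ref{lem:reduceible} and \ref{p3} for the rightmost term) shows $\depth_R(\Omega^n(M)\otimes_R N)\geq\depth_R(K\otimes_R N)$; now the depth lemma pins down $\depth_R(M\otimes_R N)=\depth_R(K\otimes_R N)$. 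The point is that the reducing sequence supplies a middle term $K$ of the \emph{same} depth as $M$, which is exactly the leverage missing from the syzygy sequence $0\to\Omega^1(M)\to F\to M\to 0$.
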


\begin{proof} We may assume both $M$ and $N$ are nonzero. Set $t=\depth R-\depth_R(M)$, and proceed by induction on $t$. Note that, by \ref{lem:reduceible2}, we have $t=\sup\{i\in \ZZ \mid\Ext^i_R(M,R)\neq0\}$. Moreover, we may assume $t\geq 1$ as if $t=0$, then the assertion follows from \ref{p3}.

Now we argue by induction on $\cx_R(M)$. If $\cx_R(M)=0$, then $\pd_R(M)<\infty$, and so the depth formula holds by Theorem \ref{thmold}. Hence we assume assume $\pd_R(M)=\infty$, i.e., $\cx_R(M)\geq 1$. As $M$ has reducible complexity, there exists a short exact sequence 
\begin{equation}\tag{\ref{prp:depth-reducible}.1}
0 \to M \to K \to \Omega^n(M) \to 0,
\end{equation}
where $n$ is a nonnegative integer, $K\in \md R$ has reducible complexity, $\cx_R(K)<\cx_R(M)$ and $\depth_R(K)=\depth_R(M)$. Note, it follows from (\ref{prp:depth-reducible}.1) that $\Ext^j_R(K,R)=0$ for all $j\gg0$, and $\Tor_{i}^R(K,N)=0$ for all $i\geq1$. So the induction hypothesis on the complexity gives the equality:
\begin{equation}\tag{\ref{prp:depth-reducible}.2}
  \depth_R(K)+\depth_R(N)=\depth(R)+\depth_R(K\otimes_R N).
\end{equation}
Note, since $\Tor_{i}^R(M,N)=0$ for all $i\geq1$, tensoring (\ref{prp:depth-reducible}.1) with $N$, we obtain the exact sequence:
\begin{equation}\tag{\ref{prp:depth-reducible}.3}
0 \to M\otimes_R N \to K\otimes_R N \to \Omega^n(M) \otimes_R N \to 0.
\end{equation}

Next we will consider cases for the nonnegative integer $n$:

\noindent \textbf{Case 1.} Assume $n=0$. Then $\Omega^n(M)=M$, and the depth lemma applied to the short exact sequence (\ref{prp:depth-reducible}.3) yields $\depth_R(M\otimes_R N)=\depth_R(K\otimes_R N)$. So, the depth formula for $M$ and $N$ holds by (\ref{prp:depth-reducible}.2).

For the remaining cases, we will make use of the following observation; it follows easily from the depth lemma and (\ref{prp:depth-reducible}.3).
\begin{equation}\tag{\ref{prp:depth-reducible}.4}
\text{The proof of the theorem is complete in case } \depth_R(\Omega^n(M)\otimes_R N)>\depth_R(K\otimes_R N).
\end{equation}


\noindent \textbf{Case 2.} Assume $1\leq n \leq t-1$. In this case, by \ref{lem:reduceible}, we know $\Omega^n(M)$ has reducible complexity.
Since $\depth(R)-\depth_R(M)=t\geq 2$, we have $\depth_R(\Omega^n(M))=\depth_R(M)+v$ for some positive integer $v$ with $1\leq v \leq t-1$. Hence, $\depth(R)-\depth_R(\Omega^n(M))=t-v<t$.
Now, by replacing the pair $(M,N)$ with $(\Omega^n(M),N)$, and by using the induction hypothesis on $t$, we obtain:
\begin{equation}\tag{\ref{prp:depth-reducible}.5}
  \depth_R(\Omega^n(M))+\depth_R(N)=\depth (R)+\depth_R(\Omega^n(M) \otimes_R N).
\end{equation}
Thus, since $\depth_R(\Omega^n(M))=\depth_R(M)+v$, we conclude from (\ref{prp:depth-reducible}.2) and (\ref{prp:depth-reducible}.5) that:
\begin{equation}\tag{\ref{prp:depth-reducible}.6}
\depth_R(\Omega^n(M) \otimes_R N)=v+\depth(N)-t=v+\depth(K\otimes_RN).
\end{equation}
In particular, we see from (\ref{prp:depth-reducible}.6) that:
\begin{equation}\tag{\ref{prp:depth-reducible}.7}
\depth_R(\Omega^n(M)\otimes_R N)>\depth_R(K\otimes_R N).
\end{equation}
Hence the required result follows due to (\ref{prp:depth-reducible}.4).

\noindent \textbf{Case 3.} Assume $n\geq t$. Notice, by \ref{lem:reduceible}, $\Omega^{n}(M)$ has weak-reducible complexity.
Hence \ref{p3} implies that:
\begin{equation}\tag{\ref{prp:depth-reducible}.8}
\depth_R(\Omega^{n}(M) \otimes_RN)=\depth_R(N).
\end{equation}
Therefore, since $t\geq 1$, (\ref{prp:depth-reducible}.2) and (\ref{prp:depth-reducible}.8) yield that: 
\begin{equation}\notag{} 
\depth_R(K\otimes_RN)<\depth_R(K\otimes_RN)+t=\depth_R(N)=\depth_R(\Omega^{n}(M) \otimes_RN).
\end{equation}
Thus the proof of Case 3, as well as the proof of the theorem, is complete by (\ref{prp:depth-reducible}.4).
\end{proof}

We now proceed to establish \ref{p3} and complete the proof of Theorem \ref{prp:depth-reducible}. For that we will make use of the following results, which are recorded here for the convenience of the reader.

\begin{chunk}\label{ArY} (\cite[4.1]{ArY}) Let $X, Y \in \md R$ be modules such that $\Ext^i_R(X,Y)=0$ for all $i\geq 1$. Then it follows that $\depth_R(\Hom_R(X,Y))=\depth_R(Y)$. \pushQED{\qed} 
\qedhere
\popQED	
\end{chunk}

\begin{chunk}\label{dualize} (\cite[3.9]{AuBr}) Let $0\to A \to B \to C \to 0$ be a short exact sequence in $\md(R)$. Then it follows that the sequence
$0\to C^{\ast} \to B^{\ast} \to C^{\ast} \to \Tr A \to \Tr B \to \Tr C \to 0$ is exact.  \pushQED{\qed} 
\qedhere
\popQED	

\end{chunk}

\begin{chunk}\label{transpose} Let $M, N \in \md R$ be modules and let $n\geq 1$ be an integer. Assume $\Ext_R^i(M,R)=0$ for all $i\geq n$. Then, for each integer $j$ with $ j\geq n$, we have 
$\Ext^j_R(M,N)\cong\Tor_1^R(\Tr\Omega^jM,N)$ and $\Tr \Omega ^{j-1} M \cong \Omega \Tr \Omega^j M$ (up to free summands); see \cite[2.8]{AuBr} for details. \pushQED{\qed} 
\qedhere
\popQED	
\end{chunk}

\begin{chunk}\label{Bee} (\cite[2.3 and 2.4(i)]{Be}; see also \cite[2.1(ii)]{Be1}) Let $M\in \md R$ and let $\eta\in\Ext^{|\eta|}_R(M,M)$ be an element. 
\begin{enumerate}[\rm(i)]
\item There is an exact sequence $0\to \Omega^{|\eta|} (K_{\eta}) \to K_{\eta^2} \oplus F \to K_{\eta} \to 0$ in $\md R$, where $F$ is a free module.
\item Assume $K_{\eta}$ reduces the complexity of $M$. Then it follows that: $$\cx_R(K_{\eta^2})=\cx_R(K_{\eta^2}\oplus F)\leq \max\{\cx_R(\Omega^{|\eta|} (K_{\eta})), \cx_R(K_{\eta})\}=\cx_R(K_{\eta})<\cx_R(M).$$
Therefore, there is an exact sequence of the form $0 \to M \to K_{\eta^2} \to \Omega^{2|\eta|-1}(M)\to 0$, where $K_{\eta^2}$ also reduces the complexity of $M$. \pushQED{\qed} 
\qedhere
\popQED	
\end{enumerate}

\end{chunk}

\begin{rmk} In \cite[2.4(i)]{Be} it is assumed that the ring in question is a complete intersection. Also, in \cite[2.1(ii)]{Be1}, it is assumed that the module $M$ considered has finite complete intersection dimension. Although we refer to \cite[2.4(i)]{Be} (or \cite[2.1(ii)]{Be1}) in the proof of \ref{p3}, we do not need that rings are complete intersections or modules have finite complete intersection dimension  in the context of our argument; see \ref{Bee}. \pushQED{\qed} 
\qedhere
\popQED	
\end{rmk}

\begin{proof}[A Proof of \ref{p3}]
We set $c=\cx_R(M)$, and proceed by induction on $c$. 

Assume $c=0$, i.e., $\pd_R(M)<\infty$. Then, since $\Ext^i_R(M,R)=0$ for all $i\geq 1$, it follows that $M$ is free. Therefore, $\Tr M=0$ and the claim follows.

Next assume $c\geq 1$. As $\Tor_i^R(M,N)=0$ for all $i\geq 1$, it follows from (\ref{a1}.1) that 
\begin{equation}\tag{\ref{p3}.1}
\Ext^1_R(\Tr\Omega^{i}M,N)=0 \text{ for all } i\geq 1. 
\end{equation}
Moreover, since $\Ext^i_R(M,R)=0$ for all $i\geq 1$, the following stable isomorphism is deduced from \ref{transpose}:
\begin{equation}\tag{\ref{p3}.2}
\Tr\Omega^{u-v}M\cong\Omega^v\Tr\Omega^{u}M  \text{, for all positive integers $u$ and $v$ with } u \geq v.
\end{equation}
Therefore, for a given integer $t\geq 2$ and $1\leq j \leq t-1$, we have that:
\begin{equation}\tag{\ref{p3}.3}
\Ext^j_R(\Tr\Omega^{t-1}M,N)\cong \Ext^1_R(\Omega^{j-1}\Tr\Omega^{t-1}M, N) \cong \Ext^1_R(\Tr\Omega^{t-j}M, N) =0.
\end{equation}
The second isomorphism and the first equality in (\ref{p3}.3) are due to (\ref{p3}.2) and (\ref{p3}.1), respectively.

Now let $\eta\in\Ext^*_R(M,M)$ be an element reducing the complexity of $M$; see \ref{rcx}. Hence, there is an exact sequence of the form:
\begin{equation}\tag{\ref{p3}.4}
0\rightarrow M\rightarrow K \rightarrow\Omega^q(M)\rightarrow0,
\end{equation}
where $q=|\eta|-1$, $K=K_{\eta}$, $\cx_R(K)<c$ and $K$ has weak-reducible complexity. As $\Tor_i^R(M,N)=0=\Ext^i_R(M,R)$ for all $i\geq 1$, it follows from (\ref{p3}.4) that $\Ext^i_R(K,R)=0=\Tor_i^R(K,N)$ for all $i\geq 1$. So, by the induction hypothesis, we conclude:
\begin{equation}\tag{\ref{p3}.5}
\Ext^i_R(\Tr K,N)=0 \text{ for all } i\geq 1 \text{, and } \depth_R(K\otimes_RN)=\depth_R(N).
\end{equation}

We proceed to prove the required assertions, i.e., the vanishing of $\Ext^i_R(\Tr M,N)$ for all $i\geq 1$ and the depth equality $\depth_R(M\otimes_RN)=\depth_R(N)$, in several steps. 
\vspace{0.1in}


\noindent \textbf{Claim 1.} We have that $\Ext^i_R(\Tr\Omega^{q}(M),N)\cong\Ext^{i+1}_R(\Tr M,N)  \cong     \Ext^{i+q+1}_R(\Tr\Omega^{q}(M),N)$ for all $i\geq 1$.\\
\noindent \textbf{Proof of Claim 1.} The short exact sequence (\ref{p3}.4), in view of \ref{dualize}, yields the exact sequence:
\begin{equation}\tag{\ref{p3}.6}
0\rightarrow {(\Omega^q(M))}^*\rightarrow {K}^*\rightarrow M^*\rightarrow \Tr\Omega^q(M)\rightarrow \Tr K_{\eta}\rightarrow \Tr M\rightarrow0.
\end{equation}
Since $\Ext^1_R(\Omega^q(M),R)=0$, the following sequence is exact:
\begin{equation}\tag{\ref{p3}.7}
0\rightarrow \Tr\Omega^q(M)\rightarrow \Tr K\rightarrow \Tr M\rightarrow0.
\end{equation}
We obtain, by applying $\Hom_R(-,N)$ to (\ref{p3}.7), the following long exact sequence:
\begin{equation}\tag{\ref{p3}.8}
\cdots\rightarrow\Ext^i_R(\Tr M,N)\rightarrow\Ext^i_R(\Tr K,N)\rightarrow\Ext^i_R(\Tr \Omega^q(M),N)\rightarrow\cdots
\end{equation}
Now (\ref{p3}.8) and (\ref{p3}.5) give:
\begin{equation}\tag{\ref{p3}.9}
\Ext^{i+1}_R(\Tr M,N)\cong\Ext^i_R(\Tr\Omega^{q}(M),N) \text{ for all } i\geq 1. 
\end{equation}
Consequently, for all $i\geq 1$, we establish:
\begin{equation}\tag{\ref{p3}.10}
\Ext^i_R(\Tr\Omega^{q}(M),N)\cong\Ext^{i+1}_R(\Tr M,N)  \cong   \Ext^{i+1}_R(\Omega^q \Tr \Omega^q(M),N)     \cong    \Ext^{i+q+1}_R(\Tr\Omega^{q}(M),N).  
\end{equation}
Here, in (\ref{p3}.10), the first and second isomorphisms are due to (\ref{p3}.9) and (\ref{p3}.2), respectively.
This completes the proof of Claim 1.  \pushQED{\qed} 
\qedhere
\popQED	
\vspace{0.1in}


\noindent \textbf{Claim 2.} We have that $\Ext^{i}_R(\Tr\Omega^{q}(M),N) \cong \Ext^{i+j(q+1)}_R(\Tr\Omega^{q}(M),N)$ for all $i\geq 1$ and $j\geq 1$.\\
\noindent \textbf{Proof of Claim 2.}  This follows by repeated applications of Claim 1. \pushQED{\qed} 
\qedhere
\popQED	
\vspace{0.1in}

\noindent \textbf{Claim 3.} We have that $\Ext^i_R(\Tr\Omega^{2q+1}(M),N)\cong\Ext^{i+1}_R(\Tr M,N)\cong\Ext^{2q+i+2}_R(\Tr\Omega^{2q+1}M,N)$ for all $i\geq 1$.\\
\noindent \textbf{Proof of Claim 3.} It follows that $\eta^2$ reduces the complexity of $M$, and there are exact sequences:
\begin{equation}\tag{\ref{p3}.11}
0 \rightarrow M \rightarrow Z \rightarrow \Omega^{2q+1}(M) \rightarrow 0,
\end{equation}
and
\begin{equation}\tag{\ref{p3}.12}
0\rightarrow\Omega^{q+1}(K)\rightarrow Z\oplus F \rightarrow K \rightarrow 0,
\end{equation}
where $Z=K_{\eta^2}$ and $F$ is a free module; see \ref{rcx} and \ref{Bee}.

As $\Ext^1_R(K,R)=0$, the following exact sequence follows from (\ref{p3}.12) and \ref{dualize}:
\begin{equation}\tag{\ref{p3}.13}
0\rightarrow\Tr K \rightarrow\Tr Z \rightarrow\Tr \Omega^{q+1}(K) \rightarrow 0.
\end{equation}
Applying $\Hom_R(-,N)$ to the exact sequence (\ref{p3}.13), we get a long exact sequence:
\begin{equation}\tag{\ref{p3}.14}
\cdots \rightarrow \Ext^i_R(\Tr \Omega^{q+1}(K),N) \rightarrow \Ext^i_R(\Tr Z, N) \rightarrow \Ext^i_R(\Tr K,N)\rightarrow\cdots
\end{equation}

Note that $\Omega^{q+1}(K)$ has weak-reducible complexity; see \ref{lem:reduceible}(i). Note also $\cx_R(\Omega^{q+1}(K))=\cx_R(K)<c$, and $\Ext^i_R(\Omega^{q+1}(K),R)=0=\Tor_i^R(\Omega^{q+1}(K),N)$ for all $i\geq 1$. Therefore, by the induction hypothesis on $c$, we have that $\Ext^i_R(\Tr\Omega^{q+1}(K),N)=0$ for all $i\geq 1$. In view of (\ref{p3}.5) and (\ref{p3}.14), we conclude: 
\begin{equation}\tag{\ref{p3}.15}
\Ext^i_R(\Tr Z, N)=0 \text{ for all } i\geq 1.
\end{equation}
The short exact sequence (\ref{p3}.11) and \ref{dualize} yield the following exact sequence: 
\begin{equation}\tag{\ref{p3}.16}
0\rightarrow(\Omega^{2q+1}M)^*\rightarrow (Z)^*\rightarrow M^*\rightarrow\Tr\Omega^{2q+1}(M) \rightarrow\Tr Z \rightarrow\Tr M\rightarrow0.
\end{equation}
Since we have $\Ext^{2q+2}_R(M,R)=0$, by (\ref{p3}.16), we get the exact sequence:
\begin{equation}\tag{\ref{p3}.17}
0\rightarrow\Tr\Omega^{2q+1}(M) \rightarrow\Tr Z \rightarrow\Tr M\rightarrow0.
\end{equation}
Now (\ref{p3}.17) induces the long exact sequence for all $i\geq 1$:
\begin{equation}\tag{\ref{p3}.18}
\cdots\rightarrow\Ext^i_R(\Tr M,N)\rightarrow\Ext^i_R(\Tr Z,N)\rightarrow\Ext^i_R(\Tr\Omega^{2q+1}(M),N)\rightarrow\cdots
\end{equation}
Consequently, for all $i\geq 1$, we have:
\begin{align}\tag{\ref{p3}.19}
\Ext^i_R(\Tr\Omega^{2q+1}(M),N) & \cong \Ext^{i+1}_R(\Tr M,N)  \\ & \notag{} \cong \Ext^{i+1}_R(\Omega^{2q+1}\Tr \Omega^{2q+1}(M),N) \\ & \notag{} \cong \Ext^{2q+i+2}_R(\Tr\Omega^{2q+1}M,N) 
\end{align}
Here, in (\ref{p3}.19), the first isomorphism follows from the long exact sequence in (\ref{p3}.18) since $\Ext^i_R(\Tr Z, N)$ vanishes for all $i\geq 1$; see (\ref{p3}.15). Furthermore, the second isomorphism of 
(\ref{p3}.19) is due to (\ref{p3}.2). This completes the proof of Claim 3.  \pushQED{\qed} 
\qedhere
\popQED	
\vspace{0.1in}


\noindent \textbf{Claim 4.} Assume $q\geq 1$. Then, given $j\geq 1$, we have that $\Ext^{i}_R(\Tr\Omega^{q}(M),N)=0$ for all $i\neq j(q+1)$, i.e., $\Ext^{i}_R(\Tr\Omega^{q}(M),N)=0$ for all $i$, where $(j-1)q+j \leq i \leq jq+(j-1)$.\\
\noindent \textbf{Proof of Claim 4.} Let $j\geq 1$ be an integer.

If $j=1$, then setting $t=q+1$ in (\ref{p3}.3), we see that $\Ext^{i}_R(\Tr\Omega^{q}(M),N)=0$ for all $i$ with $1\leq i \leq q$. Hence assume $j\geq 2$. In this case, we have $1 \leq i-(j-1)(q+1)\leq q$, and Claim 2 implies that:
\begin{equation}\tag{\ref{p3}.20}
\Ext^{i}_R(\Tr\Omega^{q}(M),N) \cong \Ext^{i-(j-1)(q+1)}_R(\Tr\Omega^{q}(M),N)
\end{equation}
We have observed $\Ext^{v}_R(\Tr\Omega^{q}(M),N)=0$ for all $v$ with $1 \leq v \leq q$. Thus, since $1 \leq i-(j-1)(q+1)\leq q$, we see that $\Ext^{i-(j-1)(q+1)}_R(\Tr\Omega^{q}(M),N)=0$.
Therefore Claim 4 follows from (\ref{p3}.20).
\pushQED{\qed} 
\qedhere
\popQED	
\vspace{0.1in}

\noindent \textbf{Claim 5.} If $q\geq 1$, then we have that $\Ext^i_R(\Tr \Omega ^qM,N)=0$ for all $i\geq 1$.\\
\noindent \textbf{Proof of Claim 5.} We have:
\begin{align}\tag{\ref{p3}.21}
0=\Ext^{q+1}_R(\Tr\Omega^{2q+1}(M),N) & \cong \Ext^{q+2}_R(\Tr M,N)  \cong \Ext^{q+1}_R(\Tr \Omega^q(M),N) & \notag{} 
\end{align}
Here, in (\ref{p3}.21), the first equality follows from (\ref{p3}.3) by letting $t=2q+2$ and $j=q+1$. Furthermore, the first and second isomorphisms are due to Claim 1 and Claim 3 (with $i=q+1$), respectively. 

Claim 2, in view of (\ref{p3}.21), implies that $0=\Ext^{q+1}_R(\Tr \Omega^q(M),N)\cong \Ext^{(q+1)+j(q+1)}_R(\Tr \Omega^q(M),N)$ for all $j\geq 1$, i.e., $\Ext^{r(q+1)}_R(\Tr \Omega^q(M),N)=0$ for all $r\geq 1$. This observation, in combination with Claim 4, establishes Claim 5.
\pushQED{\qed} 
\qedhere
\popQED	
\vspace{0.1in}


\noindent \textbf{Claim 6.} We have that $\Ext^i_R(\Tr M,N)=0$ for all $i\geq 1$.\\
\noindent \textbf{Proof of Claim 6.} Assume first $q=0$. Then, for all $i\geq 1$, we have:
\begin{align}\tag{\ref{p3}.22}
\Ext^{i+1}_R(\Tr M,N) \cong \Ext^{i}_R(\Tr M,N) \cong \Ext^{i}_R(\Tr \Omega M,N) 
\end{align}
Here, in (\ref{p3}.22), the first and the second isomorphism follows from Claim 1 and Claim 3, respectively. Since $\Ext^1_R(\Tr \Omega M,N)$ vanishes due to (\ref{p3}.1),
we conclude that $\Ext^i_R(\Tr M,N)=0$ for all $\geq 1$.

Next assume $q\geq 1$. Then, for all $i\geq 1$, we have: 
\begin{align}\tag{\ref{p3}.22}
0=\Ext^i_R(\Tr\Omega^{q}(M),N)\cong\Ext^{i+1}_R(\Tr M,N) 
\end{align}
In (\ref{p3}.22), the first equality is due to Claim 5, while the first isomorphism follows from Claim 1. Consequently, we have $\Ext^{i}_R(\Tr M,N)=0$ for all $i\geq 2$. Furthermore, it follows:
\begin{align}\tag{\ref{p3}.23}
\Ext^1_R(\Tr M,N)\cong \Ext^{1}_R(\Omega^q \Tr \Omega^q M,N) \cong  \Ext^{q+1}_R(\Tr \Omega^q M,N)=0 
\end{align}
The first isomorphism of (\ref{p3}.23) is due to (\ref{p3}.2), and the first equality is from Claim 5. This proves the vanishing of $\Ext^i_R(\Tr M,N)$ for all $i\geq 1$, and completes the proof of Claim 6.
\pushQED{\qed} 
\qedhere
\popQED	
\vspace{0.1in}


\noindent \textbf{Claim 7.} We have that $\depth_R(M\otimes_RN)=\depth_R(N)$.\\
\noindent \textbf{Proof of Claim 7.} Recall that $M^{\ast} \cong \Omega^2 \Tr M \oplus G$ for some free module $G \in \md R$; see \ref{a1}. Therefore, as Claim 6 shows $\Ext^i_R(\Tr M,N)=0$ for all $\geq 1$, we conclude that $\Ext^i_R(M^{\ast},N)=0$ for all $\geq 1$. This implies, in view of \ref{ArY}, that:
\begin{align}\tag{\ref{p3}.24}
\depth_R(\Hom(M^{\ast},N))=\depth_R(N)
\end{align}
On the other hand, since $\Ext^1_R(\Tr M,N)=0=\Ext^2_R(\Tr M,N)=0$, setting $n=0$, we obtain from \ref{a1}.1 that:
\begin{align}\tag{\ref{p3}.25}
M\otimes_R N \cong \Hom_R(M^{\ast},N)
\end{align}
Consequently, the proof of Claim 7 is complete due to (\ref{p3}.24) and (\ref{p3}.25).
\end{proof}


\end{document}